\numberwithin{figure}{section}
\newtheorem{lemma}{Lemma}[section]
\newtheorem{remark}{Remark}[section]
\begin{document}
	
	\title{Generalized Grötzsch Graphs \footnote{Email: 
			upadhyay@bhu.ac.in (A. K.  Upadhyay).}}
	\author{Ashish Upadhyay\\ Department of Mathematics\\BHU, Varanasi - 221005}

\date{}
	\maketitle

	\begin{abstract} The aim of this paper is to present a generalization of Gr\"otzsch graph. Inspired by structure of the Gr\"otzsch's graph, we present constructions of two families of graphs, $G_m$ and $H_m$  for odd and even values of $m$ respectively and on $n = 2m +1$ vertices. We show that each member of this family is non-planar, triangle-free, and Hamiltonian. Further, when $m$ is odd the graph $G_m$ is maximal triangle-free, and when $m$ is even, the addition of exactly $\frac{m}{2}$ edges makes the graph $H_m$ maximal triangle-free. We show that $G_m$ is 4-chromatic and $H_m$ is 3-chromatic for all $m$. Further, we note some other properties of these graphs and compare with Mycielski's construction.
	\end{abstract}

\section{Introduction}

Named after  Herbert Grötzsch, the Gr\"otzsch graph was used to show that the planar triangle-free graphs are 3 colorable in 1959. Since then there have been several works related to exploring and extending the properties of this graph (see, for example, \cite{Thomassen},\cite{LaLuSt} \cite{GimThom}, \cite{Youngs}). The number of vertices of the Gr\"otzsch graph is 11 which can be decomposed as 5 + 5 + 1 depending on three classes of vertices that constitute this graph, $i.e$ for this graph $11 = 2\times 5 + 1$ or $11 = 2\times m + 1$ with $m = 5$. This lead to a curious little question as to if the values of $m$ are varied over positive integers $ \geq 5$ would it be possible to construct similar graphs with properties comparable to that of the Gr\"otzsch graph? In pursuance of an answer to this curiosity, it was possible to construct two families of graphs denoted by $G_m$ and $H_m$ with $2m +1$ vertices for $m \geq 5$ odd and even respectively. The graphs $G_m$ are exactly what can be termed as generalized Gr\"otzsch graphs as they possess all the properties of this graph viz. chromatic number, being maximal triangle-free, being Hamiltonian, embeddable on the projective plane, etc. The construction of graphs $H_m$ are along similar lines except that they marginally slip to possess some properties of the Gr\"otzsch graph viz. being maximal triangle-free (although they are triangle-free), chromatic number lesser by one, embeddability, etc. 

In what follows, Section 2 contains preliminaries and required definitions followed by Section 3 containing examples, definitions, and construction of the graphs $G_m$ and $H_m$. The next section, Section 4, contains the results that we have mentioned above along with their proofs.

\section{Preliminary}
The graphs considered in this paper are finite and simple. The standard graph theoretic terms used in the paper are same as those given in \cite{Wilson}. A path $v_1 e_1 v_2 e_2 \ldots v_k e_k v_{k+1}$ in a graph is a finite sequence of vertices and edges such that $v_j$ and $v_{j+1}$ are the end vertices of the edge $e_j$. A cycle in $G$ is a path for which the initial and end vertices coincide. A graph is said to be triangle-free if it does not contain any cycle of length three. The graph is called maximal triangle-free if the addition of an edge to it introduces a triangle in it. The cycle which passes through every vertex of a graph exactly once is called a Hamiltonian cycle and the graph is called Hamiltonian in this case.  A proper vertex coloring of a graph $G$ is an assignment of colors to the vertices of $G$ such that adjacent vertices receive different colors. The smallest number of colors needed for proper vertex coloring of a graph is called its chromatic number. A graph $G$ is said to be planar if it can be drawn on the plane without any edge crossing.  The smallest number of edge crossings in a planar drawing of $G$ is called its crossing number. 

\section{The Graphs: Construction and Example}

Let $n \geq 11$ be an odd integer, $n = 2m + 1$. Define the following to be elements forming the set of vertices  and edges  of a graph $G_m$ and $H_m$ as indicated:,

\begin{itemize}

\item The vertex set consists of the vertices labelled $a_n$, $p_1, p_2, \ldots, p_{m}$, $q_1, q_2, \ldots, q_{m}$

\item The edge set consists of $a_{n}q_{i}$ where $1 \leq i \leq  m$ and 

For the following description of edges  we take $q_0 = q_m$.  Furthermore, when the indices are strictly greater than $m$ they are to be taken $modulo m$.

\item $p_{i}p_{i+1}$ where $1 \leq i \leq  m$  together with 

\item following additional edges for $G_m$ : the edges are $p_{i}q_{i+(2k-1)}$ for $1\leq i \leq m$ and $ 0 \leq k  \leq  \frac{(m - 3 )}{2} $, whereas

\item following additional edges for $H_m$ : the edges are $p_{i}q_{i+(2k-1)}$ for $1\leq i \leq m$ and $ 0 \leq k  \leq  \frac{(m - 2 )}{2} $  

\end{itemize}

We present the graphs pictorially for the values $m = 5, 6$ and 7.


\begin{adjustwidth}{-3cm}{1cm}
\begin{tikzpicture}[node distance={15mm}, thick]
\draw (12, 4) -- (10, 3) -- (10, 1) -- (12, 0) -- (14, 1) -- (14, 3) -- cycle;
\node  at (12, 3.95) {\small $\bullet$};
\node  at (12, 4.2) {\small $p_1$};
\node  at (10, 2.95) {\small $\bullet$};
\node  at (10, 3.2) {\small $p_2$};
\node  at (10, 1) {\small $\bullet$};
\node  at (9.7, 1) {\small $p_3$};
\node  at (12, 0) {\small $\bullet$};
\node  at (11.8, -0.2) {\small $p_4$};
\node  at (14, 1) {\small $\bullet$};
\node  at (14, 0.75) {\small $p_5$};
\node  at (14, 3) {\small $\bullet$};
\node  at (14.3, 3) {\small $p_6$};

\draw [red,thick]   (12, 4) to[out=170,in=250, distance=6cm ] (12, 1);
\draw [green,thick]   (10, 3) to[out=200,in=280, distance=6cm ] (13, 1.5);
\draw [blue,thick]   (10, 1) to[out=250,in=350, distance=6cm ] (13, 2.5);
\draw [red,thick]   (12, 0) to[out=300,in=40, distance=6cm ] (12, 3);
\draw [green,thick]   (14, 1) to[out=30,in=135, distance=6cm ] (11, 2.5);
\draw [blue,thick]   (14, 3) to[out=110,in=180, distance=6cm ] (11, 1.5);

\node  at (12, 2) {\small $\bullet$};
\node  at (12.4, 1.95) {\small $a_6$};
\node  at (12, 1) {\small $\bullet$};
\node  at (12.2, 0.8) {\small $q_4$};
\node  at (12, 3) {\small $\bullet$};
\node  at (12, 3.2) {\small $q_1$};
\node  at (11, 2.5) {\small $\bullet$};
\node  at (10.75, 2.5) {\small $q_2$};
\node  at (11, 1.5) {\small $\bullet$};
\node  at (10.9, 1.3) {\small $q_3$};
\node  at (13, 2.5) {\small $\bullet$};
\node  at (13.2, 2.8) {\small $q_6$};
\node  at (13, 1.5) {\small $\bullet$};
\node  at (13.3, 1.5) {\small $q_5$};
\draw (12, 2) -- (12, 1);
\draw (12, 2) -- (12, 3);
\draw (12, 2) -- (11, 2.5);
\draw (12, 2) -- (11, 1.5);
\draw (12, 2) -- (13, 2.5);
\draw (12, 2) -- (13, 1.5);
\draw (12, 4) -- (11, 2.5);
\draw (12, 4) -- (13, 2.5);
\draw (10, 3) -- (12, 3);
\draw (10, 3) -- (11, 1.5);
\draw (10, 1) -- (11, 2.5);
\draw (10, 1) -- (12, 1);
\draw (12, 0) -- (11, 1.5);
\draw (12, 0) -- (13, 1.5);
\draw (14, 1) -- (12, 1);
\draw (14, 1) -- (13, 2.5);
\draw (14, 3) -- (13, 1.5);
\draw (14, 3) -- (12, 3);

\draw [red,thick]   (2, 4) to[out=170,in=220, distance=5cm ] (1.5, 1.3);
\draw [green,thick]   (0, 3) to[out=200,in=280, distance=5cm ] (2.5, 1.3);
\draw [blue,thick]   (0, 1.5) to[out=250,in=320, distance=5cm ] (3, 1.7);
\draw [red,thick]   (1, 0.5) to[out=300,in=10, distance=5cm ] (2.7, 2.5);
\draw [green,thick]   (3, 0.5) to[out=340,in=45, distance=5cm ] (2, 2.8);
\draw [blue,thick]   (4, 1.5) to[out=45,in=120, distance=5cm ] (1.3, 2.5);
\draw [orange,thick]   (4, 3) to[out=100,in=170, distance=5cm ] (1, 1.7);

\draw (2, 2) -- (2, 2.8);
\draw (2, 2) -- (2.7, 2.5);
\draw (2, 2) -- (1.3, 2.5);
\draw (2, 2) -- (1.5, 1.3);
\draw (2, 2) -- (1, 1.7);
\draw (2, 2) -- (2.5, 1.3);
\draw (2, 2) -- (3, 1.7);
\node  at (2, 2) {\small $\bullet$};
\node  at (2.5, 2) {\small $a_{13}$};
\node  at (2, 2.8) {\small $\bullet$};
\node  at (2, 3.1) {\small $q_1$};
\node  at (2.7, 2.5) {\small $\bullet$};
\node  at (2.9, 2.7) {\small $q_7$};
\node  at (1.3, 2.5) {\small $\bullet$};
\node  at (1, 2.5) {\small $q_2$};
\node  at (1.5, 1.3) {\small $\bullet$};
\node  at (1.5, 1.0) {\small $q_4$};
\node  at (1, 1.7) {\small $\bullet$};
\node  at (0.8, 1.5) {\small $q_3$};
\node  at (2.5, 1.3) {\small $\bullet$};
\node  at (2.8, 1.0) {\small $q_5$};
\node  at (3, 1.7) {\small $\bullet$};
\node  at (3.3, 1.7) {\small $q_6$};
\draw (2, 4) -- (0, 3) -- (0, 1.5) -- (1, 0.5) -- (3, 0.5) -- (4, 1.5) -- (4, 3) -- cycle;
\node  at (2, 3.95) {\small $\bullet$};
\node  at (2, 4.2) {\small $p_1$};
\node  at (0, 3) {\small $\bullet$};
\node  at (0, 3.2) {\small $p_2$};
\node  at (0, 1.5) {\small $\bullet$};
\node  at (-0.3, 1.4) {\small $p_3$};
\node  at (1, 0.5) {\small $\bullet$};
\node  at (0.8, 0.3) {\small $p_4$};
\node  at (3, 0.5) {\small $\bullet$};
\node  at (3, 0.3) {\small $p_5$};
\node  at (4, 1.5) {\small $\bullet$};
\node  at (4.2, 1.3) {\small $p_6$};
\node  at (4, 3) {\small $\bullet$};
\node  at (4.2, 3.2) {\small $p_7$};
\node at (2, -2.5) {Figure 2: {\bf The graph $G_7$}};
\node at (12, -2.5) {Figure 3: {\bf The graph $H_6$}};
\draw (4, 3) -- (2, 2.8);
\draw (4, 3) -- (3, 1.7);
\draw (2, 4) -- (2.7, 2.5);
\draw (2, 4) -- (1.3, 2.5);
\draw (0, 1.5) -- (1.5, 1.3);
\draw (0, 1.5) -- (1.3, 2.5);
\draw (0, 3) -- (1, 1.7);
\draw (0, 3) -- (2, 2.8);
\draw (1, 0.5) -- (2.5, 1.3);
\draw (1, 0.5) -- (1, 1.7);
\draw (3, 0.5) -- (1.5, 1.3);
\draw (3, 0.5) -- (3, 1.7);
\draw (4, 1.5) -- (2.7, 2.5);
\draw (4, 1.5) -- (2.5, 1.3);

\draw (6, 12) --  (4, 10)  -- (5, 8) -- (7, 8) -- (8, 10) --cycle;
\node  at (6, 12) {\small $\bullet$};
\node  at (6, 12.2) {\small $p_1$};
\node  at (4, 10) {\small $\bullet$};
\node  at (3.7, 10) {\small $p_2$};
\node  at (5, 8) {\small $\bullet$};
\node  at (4.7, 8) {\small $p_3$};
\node  at (7, 8) {\small $\bullet$};
\node  at (7.3, 8) {\small $p_4$};
\node  at (8, 10) {\small $\bullet$};
\node  at (8.3, 10) {\small $p_5$};
\draw (6, 9.5) -- (6, 10.5);
\draw (6, 9.5) -- (5, 10);
\draw (6, 9.5) -- (7, 10);
\draw (6, 9.5) -- (5.5, 8.5);
\draw (6, 9.5) -- (6.5, 8.5);
\draw (6, 12) -- (5, 10); \draw(5, 8) -- (5, 10);
\draw (6, 12) -- (7, 10); \draw (5, 8) -- (6.5, 8.5);
\draw (4, 10) -- (6, 10.5); \draw (7, 8) -- (5.5, 8.5);
\draw (4, 10) -- (5.5, 8.5); \draw (7, 8) -- (7, 10);
\draw (8, 10) -- (6.5, 8.5); \draw (8, 10) -- (6, 10.5);

\node at (6, 9.5) {\small $\bullet$};
\node at (6.4, 9.3) {\small $a_{11}$};
\node  at (6, 10.5) {\small $\bullet$};
\node  at (6, 10.8) {\small $q_1$};
\node  at (5, 10) {\small $\bullet$};
\node  at (4.8, 9.8) {\small $q_2$};
\node  at (5.5, 8.5) {\small $\bullet$};
\node  at (5.3, 8.4) {\small $q_3$};
\node  at (6.5, 8.5) {\small $\bullet$};
\node  at (6.8, 8.4) {\small $q_4$};
\node  at (7, 10) {\small $\bullet$};
\node  at (7.3, 10) {\small $q_5$};
\node at (6, 7) {Figure 1: {\bf The Gr\"otzsch Graph}};
 \end{tikzpicture}
\end{adjustwidth}

\section{Results}

In this section, we present some properties of the graphs $G_m$ and $H_m$ as lemma and also prove them

\begin{lemma}\label{L1}
The chromatic number of $H_m$ is 3  and that of $G_m$ is 4.

\end{lemma}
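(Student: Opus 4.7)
The plan is to prove the two equalities separately; in each case an explicit coloring gives the upper bound, while the lower bound comes from a simple structural observation (an odd cycle for $H_m$, a Mycielskian subgraph for $G_m$).

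For $H_m$ (so $m$ is even) I would first write down the following candidate 3-coloring and verify it directly: put $c(a_n)=3$, color each $p_i$ by the parity of $i$ (say $c(p_i)=1$ for $i$ odd and $c(p_i)=2$ for $i$ even), and color each $q_j$ by the parity of $j$ in the matching way ($c(q_j)=1$ for $j$ odd, $c(q_j)=2$ for $j$ even). Since $m$ is even, parity modulo $m$ is well defined, and the relation $j-i\in\{-1,1,3,\dots,m-3\}$ used to define the $p_iq_j$ edges forces $j-i$ to be odd, so $i,j$ have opposite parity; together with $c(a_n)=3$ this shows the assignment is proper. For the matching lower bound, I would exhibit the 5-cycle $a_n - q_1 - p_2 - p_3 - q_2 - a_n$ (verifying each of its edges is in $H_m$), which shows $H_m$ is not bipartite and hence $\chi(H_m)\ge 3$.

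For $G_m$ (so $m$ is odd) I would exhibit an even simpler 4-coloring for the upper bound: set $c(a_n)=4$, $c(p_1)=4$, alternate $c(p_i)\in\{1,2\}$ along the path $p_2p_3\cdots p_m$ (which has even length $m-1$ and endpoints $p_2,p_m$ both non-adjacent to each other but both adjacent to $p_1$), and finally set $c(q_j)=3$ for every $j$. The only edges to check are those on the $p$-cycle (properly colored by construction, with the wrap edge $p_mp_1$ realised as $2$ vs.\ $4$), the edges $a_nq_j$ coloured $4$ vs.\ $3$, and the edges $p_iq_j$ where $c(p_i)\in\{1,2,4\}$ and $c(q_j)=3$; since the $q_j$ form an independent set, no further constraints arise. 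For the lower bound, the key observation is that $G_m$ contains the Mycielskian $\mu(C_m)$ of the odd cycle $C_m$ as a spanning subgraph: the $p$-cycle is present, the edges $a_nq_i$ are present, and the Mycielski edges $q_ip_{i\pm 1}$ correspond exactly to $j-i=\mp 1\in\{-1,1,3,\dots,m-4\}$ in our construction. Invoking Mycielski's theorem, $\chi(\mu(C_m))=\chi(C_m)+1=4$, so $\chi(G_m)\ge 4$.

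The main obstacle is the lower bound $\chi(G_m)\ge 4$; the rest is bookkeeping once the colorings are written down. If a self-contained argument is preferred over citing Mycielski, I would argue directly: suppose $G_m$ admits a proper 3-coloring $c$ with $c(a_n)=k$. Then $c(q_j)\ne k$ for every $j$. Define a new coloring $c'$ of the $p$-cycle by $c'(p_i)=c(p_i)$ whenever $c(p_i)\ne k$, and $c'(p_i)=c(q_i)$ otherwise. Using the edges $p_{i\pm 1}q_i$ to check the case $c(p_i)=k$ against $c(p_{i\pm 1})$, one verifies that $c'$ is a proper coloring of $C_m$ that avoids the color $k$, i.e.\ a proper $2$-coloring of an odd cycle, which is impossible. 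Combining the two bounds gives the stated chromatic numbers.
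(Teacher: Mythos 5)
Your proposal is correct, and it is substantially more complete than the paper's own argument. The paper's proof of Lemma~\ref{L1} consists only of the upper bounds: it exhibits the coloring ($a_n$ gets color~1, all $q_i$ get color~2, and the $p$-cycle is colored with colors $\{1,3\}$ when $m$ is even, or three colors when $m$ is odd) and then simply asserts that this uses ``the smallest number of colors.'' Your explicit colorings play the same role (modulo a relabeling of which classes reuse colors), so that half of your argument matches the paper. Where you genuinely add content is in the lower bounds: the odd $5$-cycle $a_n q_1 p_2 p_3 q_2$ certifying $\chi(H_m)\ge 3$, and, more importantly, the observation that $G_m$ contains the Mycielskian $\mu(C_m)$ of an odd cycle as a spanning subgraph, giving $\chi(G_m)\ge 4$. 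The latter is the substantive claim of the lemma --- a triangle-free graph requiring four colors is the whole point of a Gr\"otzsch-type construction --- and the paper defers this observation to an informal later remark (attributed to West) rather than using it in the proof. Your self-contained fallback argument (collapsing a hypothetical $3$-coloring of $G_m$ to a $2$-coloring of the odd $p$-cycle via the edges $p_{i\pm 1}q_i$) is also correct and is exactly the standard Mycielski argument, so your write-up would stand on its own without citing Mycielski's theorem. In short: same upper-bound construction, but you supply the lower-bound reasoning that the paper leaves implicit.
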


\begin{proof}
When $m$ is even assign color 1 to $a_n$ and color $2$ to $q_i$ for all $1 \leq i \leq m$. Then the vertices of the cycle $C_{m}(p_1, p_2, \ldots, p_{m})$ can  be assigned color no 1 and 3 giving rise to a proper vertex coloring of $H_m$ with the smallest number of colors in this case. The case when $m$ is odd is similar except that the cycle $C_{m}(p_1, p_2, \ldots, p_{m})$  will need two additional colors giving rise to a proper vertex coloring of $G_m$ with the smallest number of colors.
\end{proof}

\begin{lemma}\label{L2}
The graphs $G_m$ and $H_m$ are both Hamiltonian.
\end{lemma}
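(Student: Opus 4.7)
The plan is to exhibit an explicit Hamiltonian cycle in each graph, using only edges common to $G_m$ and $H_m$: the cross-edges $p_i q_{i\pm 1}$ (which are the $k=0$ and $k=1$ instances of the edge rule in the construction and hence appear in both families for every admissible $m$), the outer cycle edges $p_i p_{i+1}$, and the spokes $a_n q_i$. The underlying idea is a zig-zag that weaves $q$'s and $p$'s, pulling out the odd-indexed $q$'s on the way out and the even-indexed $q$'s on the return, with a single parity swap through the $p$-cycle in the middle.

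For odd $m$ I would propose the cycle
\[
a_n,\ q_1,\ p_2,\ q_3,\ p_4,\ \ldots,\ p_{m-1},\ q_m,\ p_1,\ p_m,\ q_{m-1},\ p_{m-2},\ \ldots,\ p_3,\ q_2,\ a_n,
\]
in which the pivot $q_m \to p_1 \to p_m$ uses the ``wraparound'' cross-edge $p_1 q_m = p_1 q_{1-1}$ together with the outer edge $p_m p_1$. For even $m$ I would take the analogous cycle
\[
a_n,\ q_1,\ p_2,\ q_3,\ \ldots,\ q_{m-1},\ p_m,\ p_1,\ q_2,\ p_3,\ \ldots,\ p_{m-1},\ q_m,\ a_n,
\]
where the parity swap is performed by the single outer edge $p_m p_1$ alone.

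Verification in each case reduces to two routine bookkeeping checks: that every vertex of the graph appears exactly once on the list (a parity count of odd- and even-indexed $p$'s and $q$'s together with $a_n$ totaling $2m+1$), and that every consecutive pair on the list is an edge of one of the three types identified above. I do not anticipate any real obstacle beyond the index arithmetic; the only point worth a moment's attention is invoking the cyclic convention $q_0 = q_m$ to make the pivot edge $p_1 q_m$ available in the odd case.
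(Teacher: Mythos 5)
Your proposal is correct and follows essentially the same approach as the paper: an explicit Hamiltonian cycle that zig-zags between the $p_i$ and $q_j$ vertices using only the $k=0,1$ cross-edges $p_iq_{i\pm1}$ (present in both $G_m$ and $H_m$), two spokes at $a_n$, and one outer edge $p_mp_1$. Your version is in fact spelled out more carefully than the paper's single telegraphic path, with the parity split and vertex count made explicit.
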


\begin{proof}
We give a Hamiltonian cycle in the graphs. Starting with $p_1$ we follow the path $p_1 q_2 p_3 q_4 \ldots$ $ p_{m-1} q_m a_n q_1 p_m$, where $n = 2m +1$. This is a Hamiltonian cycle in $G_m$ and $H_m$ both.

\end{proof}

\begin{lemma}\label{L3} 
The graphs $G_m$ and $H_m$ are triangle-free.  

\end{lemma}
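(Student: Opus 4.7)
The plan is to enumerate the possible types of triples of mutually adjacent vertices according to how many of them come from each of the three classes $\{a_n\}$, $\{p_1,\dots,p_m\}$, $\{q_1,\dots,q_m\}$, and rule out each type from the explicit edge list in Section~3.

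First I would note the bipartite-like restrictions imposed by the construction: $a_n$ is adjacent only to the $q_i$'s; there are no edges among the $q_i$'s; and the only edges among the $p_i$'s are those of the $m$-cycle $p_1p_2\cdots p_m$. These three observations instantly kill any triangle containing $a_n$ (it would force a $q$--$q$ edge), any triangle with three $q$'s, any triangle with one $p$ and two $q$'s, and any triangle with $a_n$, a $p$, and a $q$ (no $a_n$--$p$ edge). For a triangle inside the $p$-class, since the induced subgraph on the $p_i$'s is a cycle of length $m \geq 5$, no triangle arises there either.

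The only remaining case, and the one that needs actual work, is a triangle of the form $p_i, p_{i+1}, q_j$: here one must check that no $q_j$ is simultaneously adjacent to two consecutive vertices of the $p$-cycle. Using the definition, $q_j \sim p_i$ iff $j - i \pmod m$ lies in the offset set
\[
S \;=\; \{\,2k-1 : 0 \le k \le K\,\} \pmod m,
\]
where $K = (m-3)/2$ for $G_m$ (so $S \equiv \{m-1, 1, 3, 5, \dots, m-4\}$) and $K = (m-2)/2$ for $H_m$ (so $S \equiv \{m-1, 1, 3, 5, \dots, m-3\}$, i.e.\ all odd residues mod the even number $m$). The existence of a triangle $p_i p_{i+1} q_j$ is equivalent to $S$ containing two elements differing by $1$ modulo $m$.

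The key step I expect to be the main (and only) obstacle is this last modular check, which I would dispatch as follows. For $H_m$ with $m$ even, every element of $S$ is odd, so any two elements of $S$ differ by an even number, and $1$ is excluded. For $G_m$ with $m$ odd, the listed elements of $S$ are $1,3,5,\dots,m-4$ together with $m-1$; consecutive elements in this circular list differ by $2$ everywhere except for the wrap-around gap from $m-4$ to $m-1$, which has size $3$. In particular no two elements of $S$ differ by $1 \pmod m$, ruling out the triangle. Combining this with the trivial cases completes the proof that $G_m$ and $H_m$ are triangle-free.
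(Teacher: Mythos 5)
Your proposal is correct and follows essentially the same route as the paper: classify potential triangles by how their vertices distribute over $\{a_n\}$, the $p_i$'s, and the $q_i$'s, dispose of all cases trivially except $p_i p_{i+1} q_j$, and then rule that out. The only difference is that where the paper asserts the final step is "easy to see," you actually carry out the modular check on the offset set $S$, which is a welcome strengthening rather than a deviation.
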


\begin{proof} Let  $m$ be as above, It is clear that there are no triangles containing the vertex $a_n$. By symmetry, it is sufficient to consider the cycles containing $p_1$. It is also immediate that there is no triangle consisting entirely of vertices of the type $p_i$. Since $q_i$'s are not mutually connected by edges so the existence of a triangle containing two vertices of the type $q_i$ is ruled out. Thus the only possibility is that a triangle, if it exists, is of the type $p_1q_ip_j$ for some $i$ and $j$. It is easy to see that the values of $j$ are $2$ and $n$. For both these values of $j$ no value of $i$ is possible such that $p_1$ and $q_i$ form an edge. Hence it follows that the graph $G_m$ and $H_m$ are both triangle-free.

\end{proof}

\begin{lemma}\label{L4}
For $m$ odd, the graphs $G_m$ are maximal triangle-free. 
\end{lemma}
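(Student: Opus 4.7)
The plan is to show that every pair of non-adjacent vertices of $G_m$ shares a common neighbor; combined with Lemma \ref{L3}, this shows that adding any missing edge creates a triangle, i.e.\ that $G_m$ is maximal triangle-free. I would begin by classifying the non-edges by vertex type. Writing $D = \{-1, 1, 3, \ldots, m-4\}$ for the ``difference set'' that controls $p$-$q$ adjacency (so $p_i \sim q_j$ iff $j - i \in D \pmod{m}$), the non-edges fall into four families: (A) $\{a_n, p_i\}$; (B) $\{p_i, p_j\}$ with $j - i \not\equiv \pm 1 \pmod{m}$; (C) $\{p_i, q_j\}$ with $j - i \notin D \pmod{m}$; and (D) $\{q_i, q_j\}$ with $i \neq j$.

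Families (A) and (D) are immediate: $q_{i+1}$ is a common neighbor in (A) because $1 \in D$, and $a_n$ is a common neighbor in (D). For families (B) and (C), the cyclic symmetry in the index $i$ reduces to the case $i = 0$.

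The heart of the argument is a short modular-arithmetic computation for (B) and (C). For (B), a common $q$-neighbor of $p_0$ and $p_j$ exists iff $j \in D - D \pmod{m}$. Since $D$ is the set of odd integers in $[-1, m-4]$, its difference set $D - D$ equals the set of even integers in $[-(m-3), m-3]$. Because $m$ is odd, reducing modulo $m$ sends this onto every residue class except $\pm 1$, which is precisely the set of $j$ allowed by (B). For (C), the only candidate common neighbors of $p_0$ and $q_j$ are $p_{-1}$ and $p_1$, since the $q$'s are pairwise non-adjacent and $p_0 \not\sim a_n$; hence existence is equivalent to $j \in (D-1) \cup (D+1) \pmod{m}$. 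I would then verify that $(D-1) \cup (D+1)$ modulo $m$ equals the complement of $D$ modulo $m$ in $\mathbb{Z}/m\mathbb{Z}$, so that the non-adjacency hypothesis $j \notin D \pmod{m}$ forces $j$ into this union and thus produces a common neighbor.

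The main obstacle is the modular bookkeeping: handling the wrap-around at the endpoint $-1$ of $D$, and using the parity of $m$ correctly. The odd-$m$ hypothesis is exactly what makes the even integers in $[-(m-3), m-3]$ cover precisely $m-2$ residue classes modulo $m$, missing the pair $\{1,-1\}$ which corresponds to the cycle edges among the $p$'s --- the very pairs already accounted for by adjacency. Once the two set identities $D - D \equiv \mathbb{Z}/m\mathbb{Z} \setminus \{\pm 1\}$ and $(D-1) \cup (D+1) \equiv \mathbb{Z}/m\mathbb{Z} \setminus D$ are in hand, collecting the four cases completes the proof.
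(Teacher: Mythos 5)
Your proposal is correct and follows essentially the same route as the paper: classify the possible new edges by vertex type ($a_np_i$, $q_iq_j$, $p_iq_j$, $p_ip_j$), and exhibit a common neighbor in each case --- $q_{i+1}$, $a_n$, one of $p_{i\pm 1}$, and a suitable $q_\ell$ respectively. The only difference is one of rigor: where the paper asserts the last two cases are ``easy to see'' via a parity remark, you make them precise through the verified identities $D-D\equiv \mathbb{Z}/m\mathbb{Z}\setminus\{\pm 1\}$ and $(D-1)\cup(D+1)\equiv \mathbb{Z}/m\mathbb{Z}\setminus D$, which is where the hypothesis that $m$ is odd actually enters.
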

	
\begin{proof}
It is clear that the addition of an edge of type $p_i{a_n}$ or $q_iq_j$ introduces a triangle. By symmetry, it is sufficient to consider the analysis at vertex $p_1$. If we add a new edge of type $p_1q_k$ it is easy to see that $q_k$ forms an edge with $p_2$ or $p_n$. Leading to the formation of a triangle in the graph $G_m$.

On the other hand, introducing a new edge of the type $p_1p_k$ introduces either a triangle $p_1p_2p_3$, $p_1p_{m-1}p_m$ or of the following type: $p_1$ $p_k$. For $k$ odd have a $q_k$ in common for some $k$ and for $k$ even  $p_k q_{m}$ is always an edge which in turn forms a triangle with $p_1$. In other words, the star of $q_m$ consists of vertices $p_1$, $a_n$, $p_{m-1}$, $p_{m-3}$, \ldots, $p_{m-5}$.

\end{proof}

\begin{remark}
It is easy to see that the graphs  $H_m$ can be made maximal triangle-free by adding $\frac{m}{2}$ number of edges of the type $p_ip_{i+ \frac{m}{2} }$.
\end{remark}

\begin{remark}(Embeddings)
In \cite{Zeps}, the author has given an embedding of the Gr\"otzsch graph on the projective plane. Following the same pattern of vertices with a slight change in labeling it is easy to show that the graphs $G_m$ admit embedding on the projective plane. Similarly, it can be easily seen that the graphs $H_m$ embed on a pinched Torus with pinching at the vertex $a_n$. 
\end{remark}

\begin{remark}(Girth)
Although the graphs $G_m$ and $H_m$ are both triangle-free, it is easy to see that both these graphs have girth 4. 

\end{remark}

\begin{remark} (Mycielski's Construction)

This was remarked in \cite{West}. The black edges in $G_{m}$ comprise the result of applying Mycielski's construction to the cycle $C_m$, let us call it $M_m$.  We then add edges to $M_m$ which results in $G_m$.   The graph $M_m$ already has chromatic number 4 and is triangle-free. Thus $G_m$ is a maximal triangle-free supergraph of $M_m$ which preserves the chromatic number. 

\end{remark}

\section{Conclusion}
All the graphs $G_m$ as constructed in this article are non-planar triangle-free and 4 chromatic. They possess all of those properties of the Gr\"otzsch graph which do not change with an increase in the number of vertices or edges. Thus the graphs $G_m$ may be rightly named as the generalized Gr\"otzsch graphs. The graphs $H_m$ are companion graphs constructed following the same procedures as $G_m$'s. These graphs may serve as test cases for results in graph theory as much remains to be explored about them.

\section*{Acknowledgement}
The work of the author is partially supported by a grant from SERB DST through the project (MTR/2020/000006).

  \end{document}